\newtheorem{thm}{\bf Theorem}
\newtheorem{lem}{\bf Lemma}
\newtheorem{cor}{\bf Corollary}
\newtheorem{exam}{\bf Example}
\newenvironment{prf}{{\noindent\bf Proof}\rm}{\hfill{$\Box$}}
\begin{document}

\title{The stochastic queue core problem on a tree}
\author{
Mehdi\ Zaferanieh
       {\thanks
       {Department of Mathematics, Hakim Sabzevari University, Tovhid town, Sabzevar,
        Iran, email: m.zaferanieh@sttu.ac.ir }}
Jafar\ Fathali
      {\thanks
      {Department of Mathematics, Shahrood University of Technology, University Blvd.,
        Shahrood, Iran, email: fathali@shahroodut.ac.ir}}
 }
\date{}
\maketitle

\begin{abstract}
In this paper, an stochastic queue core problem on
a tree, which seeks to find a core in an M/G/1 operating environment is investigated.
Let $T=(V,E)$ be a tree, an stochastic queue core of $T$ is assumed to be a path $P$, for which the
summation of the weighted distances from all vertices to the path
as well as the average response time on the path is minimized.
Some general properties of the stochastic queue core problem on the tree are presented, while an algorithm
with $\max\{o(n^2 l),o(n^2 log^2(n))\}$ is provided to find the queue $l$-core on the tree.
\end{abstract}

\begin{quote}
\noindent {\bf Keywords:} Location theory, Core, Queueing.
\end{quote}
\section{Introduction}
The problem of finding the core  of a tree involves determining a path such that
the sum of the weighted distances from all vertices to this path is minimized.
Morgan and Slater \cite{MS80} developed an efficient linear time algorithm
to find a core of a tree. Becker et al. \cite{BCLSS02} and Alstrup et al. \cite{ALST01} presented an $O(n log^2 n)$
time algorithm to find a core on weighted trees whose length is at most $l$.
A branch-and-cut algorithm to find a core of general networks was presented by
Avella et al \cite{ABSV05}.

Zaferanieh and Fathali \cite{ZF12} investigated a
semi-obnoxious case on a tree in which some of the vertices are desirable, having positive weights,
while the others are undesirable, having negative weights. They proved whenever the
weight of tree is negative the core of the tree must be a single vertex, and when the
weight of tree is zero there exists a core that is a vertex.

The problem of finding a path-center on a tree (i.e. minimizing the maximal
distances to all vertices) was considered by Hedetniemi et al. \cite{HCH81} and
Slater \cite{S82}. The combination of finding a core and a path-center was investigated
by Averbakh and Berman \cite{AB99}. They presented an $O(n \log n)$ time algorithm for
the problem on a tree with $n$ vertices.

The stochastic queueing location problem mainly deals with $p-median$ models.
In the classical $p-median$ problem, the costumers would travel to the closest fixed
new facility to obtain the service required, while in the Stochastic Queue
Median $(SQM)$ problem. The service providers travel to the costumers
to provide the service required.

Kariv and Hakimi \cite{KH79} proved that the $p-median$ problem is NP-hard on general networks.
When the network is a tree they showed the problem can be solved in $O(p^2n^2)$ time. Tamir
\cite{T96} improved the time complexity to $O(pn^2)$. For the case $p = 1$, Goldman
\cite{G71} presented a linear time algorithm. For $p = 2$, an $O(n log n)$ time algorithm
was provided by Gavish and Sridhar \cite{GS95}.

Early work on $SQM$ problem was introduced by Berman et al.
\cite{BLC85}, which analyzed the $SQM$ model for the location of a single server
on a network operating as an $M/G/1$ queue. Chiu et al. \cite{CBL85} specialized
the Berman et al. \cite{BLC85} results for the case of a tree network.
The SQM was extended to finding the optimal location of a facility with $k$
servers by Batta and Berman \cite{BB89}. Finding the optimal
location of $p$ facilities with cooperation and without cooperation
between servers were investigated by Berman and Mandowsky
\cite{BM86} and Berman et al. \cite{BLP87}, resectively

Wang et al. \cite{WBR02} considered the problem of locating facilities that are
modelled as M/M/l queuing systems. Customers travel to the closest
facilities and a constraint is placed on the maximum expected waiting
times in all facilities. Berman and Drezner \cite{BD07} generalized this model
by allowing the facilities to host more than one server. To read about
other extensions of $SQM$ the reader is referred to Berman and Krass \cite{BK02}.

In this paper, the Stochastic Queue Core $(SQC)$ problem on a tree is considered.
The objective function is to find a path $P$ which minimizes the summation of
average response time as well as average distances from clients and manufacturing cost of the path.
When a service demand arises from a client, he/she should go to the closest vertex on the path.
Then in response to demand, the server travels (if available) along the path to provide the service.

The demands are assumed to arrive according to a homogenous Poisson process.
The model from a queueing standpoint is an M/G/1 queueing system.
It is also assumed the demands are places in a $FCFS$ (first
come first served) infinite capacity queue to await service.
Applications can be found in the design of high-speed communication networks.

In what follows, some notations and definitions are provided in Section 2.
Some properties and a sufficient condition to determine the $SQC$ are presented in Section 3.
In Section 4, the $l$-core problem is introduced while in Section 5 an algorithm with time complexity
$\max\{o(n^2 l),o(n^2 log^2(n))\}$ is given and some numerical examples are presented.

\section{Problem formulation}
Let $T=(V,E)$ be a tree, where $V$ is the set of vertices, $|V|=n$ and $E$ is the
set of edges. Demands for service occur on
the nodes of the tree. Each node $v_i$ generates an independent, Poisson
distributed stream of demands at rate of $\lambda_i$. Let $T_{u}$ be a subtree of $T$ including
all vertices $v\in T$ such that $d(u,v)\leq d(u_i,v)$ for all vertices while $u_i\in P$.
Table \ref{terms} contains a glossary of frequently used terms in this paper.

\begin{table}
\centering
\small
\begin{tabular}{ll}
$P$ & The path between two vertices\\
$\lambda=\sum_{i=1}^{n}\lambda_i$&system-wide average rate of arrival demands\\
$w_i=\frac{\lambda_i}{\lambda}$& fraction of calls originating from demand points $i$ \\
$w_{T_i}=\sum_{v_j \in T_i} w_j$ & the weight of branch $T_i$ \\
$p(i)$ & $i$th vertex of path $P$ \\
$d(v_{i},v_{j})$&  length of shortest path between vertices $v_{i}$ and $v_{j}$\\
$d(P,v)=\min_{u\in P}d(u,v)$& length of shortest path between path $P$ and vertex $v$\\
$G_i$& non-travel related service time at demand $i$\\
$s_i(P)$& service time of demand $i$\\
$TR_i(P)$&total response time of demand $i$\\
$Q_i(P)$& the time that demand $i$ wait in queue\\
$\overline{G}$&average non-travel service time\\
$\overline{S}(P)$&expected service time to a demand for server on the path $P$\\
$\overline{S}^{2}(P)$&second moment of service time to a demand for server on the path $P$\\
$\overline{TR}(P)$&expected total response time to a demand for server on the path $P$\\
$\overline{Q}(P)$&average queueing delay incurred by a demand for server on the path $P$\\
\end{tabular}
 \caption{The terms that used in the paper.}\label{terms}

\end{table}
Since the server travels along the path to response the requested services,
the position of the server at different times should be estimated. The probability that the server has been located
in a typical vertex $p\in P$ is indicated by $Prob_P(\hat{u}=p)$ and should be stated as follows:
$$Prob_P(\hat{u}=p)=\frac{\sum_{v_i\in T_{p}}\lambda_i}{\sum_{v_i\in T}\lambda_i}={\sum_{v_i\in T_{p}}w_i}=w_{T_p}.$$
Therefore, the mean distance from the server to a costumer which is located in vertex $p\in P$ is readily computed as follows:
$$\overline{d}_P(\hat{u},p)=\sum_{p_i\in P}Prob_P(\hat{u}=p(i))d(p(i),p).$$

When a service demand arises from a client (node), he/she would travel to
the closest vertex of the path to get his/her proposed service. Therefore, the client's mean distance
to reach the path is considered to be $\overline{d}_1(P)=\sum_{v_i\in T}w_id(P,v_i)$.
The average distance that the server travels to serve the clients is introduced by
$\overline{d}_2(P)=\sum_{v_i\in T}w_i\overline{d}_P(u,p(i))$,
where $p(i)$ is the closest vertex on the path $P$ to the vertex $v_i$.

If $vt$ is the {\em cruising speed}, then the travel time that clients reach
the path $P$ and the service time to serve them would be obtained by the following equalities, respectively:
$$\overline{T}_1(P)=\frac{1}{vt}\overline{d}_1(P),~~
\overline{T}_2(P)=\frac{1}{vt}\overline{d}_2(P).$$

Using these definitions the expected service time to response the requested services and
the total expected response time to reply all demands by the server on the path $P$ can be
calculated as follows, respectively: $$\overline{S}(P)=\overline{T}_2(P)+\overline{G}=\sum_{v_i\in T}w_is_i(P)+\overline{G},$$
$$\overline{TR}(P)=\beta( \overline{Q}(P)+\overline{T}_2(P))+\overline{G}+(1-\beta) \overline{T}_1(P),$$
where $0\leq \beta\leq 1$ and $s_i(P)$ is the total service time associated with a serviced demand from node $i$,
represented by the following:
$$s_i(P)=\frac{1}{vt}(\overline{d}_P(\hat{u},p(i))+G_i).$$
Here, it is assumed the Poisson arrival requests are served by a server which travels along the path $P$ and given by service distribution $s_i (P)$. In the $M/G/1$ case of queuing theory the inter arrival times and service times of consecutive costumers are assumed to be independent and identically distributed. In the proposed model, first the mean distance from the server to a costumer in vertex $p(i)$, i.e. $\overline{d}_P(\hat{u},p(i))$ is computed. Then the average distance that the server travels to serve the clients, i.e. $\overline{d}_2 (P)$ is calculated. Also the expected of service times on all vertices along the core are considered as the service times, see $\overline{S}(P)$ and $\overline{S^2}(P)$; therefore, the service times of consecutive costumers are independent and identically distributed.

If the manufacturing costs are also added to the problem, then
the objective function of $SQC$ problem becomes minimizing the following objective function:
$$F(P)=\alpha_1 |P|+\alpha_2\overline{TR}(P)=\alpha_1 |P|+
\alpha_2(\beta( \overline{Q}(P)+\overline{T}_2(P))+\overline{G}+(1-\beta) \overline{T}_1(P)),$$
where $\alpha_1\geq 0$ is the manufacturing costs of path, while $\alpha_2\geq 0$
is the price that should be paid for unit of time. Note that $\overline{G}$ is constant and can be removed,
while the other requested terms can be given as follows:
$$TR_i(P)=\beta(Q_i(P)+\frac{1}{vt}\overline{d}_P(\hat{u},p(i)))+\frac{(1-\beta)}{vt} d(P,v_i)+G_i,$$
$$\overline{S^2}(P)=\sum_{v_i\in T}w_is^2_i(P),$$

\begin{equation}\label{queue}
\overline{Q}(P) = \left\{\begin{array}{ccc}
                        \frac{\lambda \overline{S^2}(P)}{2(1- \lambda
                        \overline{S}(P))} &   if  &1- \lambda \overline{S}(P)>0 \\
                         \infty & & otherwise.
                       \end{array}\right.
\end{equation}

\section{properties}
In this section, properties of the objective function $F(P)$ is investigated.
In the rest of this paper, vertex $u$ is assumed to be adjacent to the path $P$.
The following lemma gives a recursion relation to find the first and second moment service time
on the path $P$ and consecutively on the path $P'=P\cup \{u\}$.

\begin{lem}\label{sip}
Let $P$ be a path on the tree $T$, $u$ be a vertex adjacent to the path $P$ and
$P'=P\cup \{u\}$ then
$$\overline{S}(P')=\overline{S}(P) + \frac{2}{vt}w_{T_u}(w(T)-w_{T_u})d(P,u).$$
$$\overline{S}^2(P')=\overline{S}^2(P)+\frac{2}{vt}w_{T_u}d(P,u)\overline{S}(P)+ \frac{1}{vt^2}w_{T_u}(w(T)-w_{T_u})w(T)d^2(P,u)$$
\end{lem}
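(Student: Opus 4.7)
Let $v \in P$ be the unique vertex of $P$ adjacent to $u$ and set $c := d(P,u) = d(v,u)$. The plan is to reduce both identities to a direct tracking of three structural changes as $P$ is extended to $P' = P \cup \{u\}$.

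First, since $T$ is a tree, the path from any $p \in P$ to $u$ passes through $v$, so $d(p,u) = d(p,v) + c$. Second, the server-location probabilities change only at $v$ and $u$: since the vertices of $T_u$ were previously closest to $v$ among the vertices of $P$ and now become closest to $u$, one has $Prob_{P'}(\hat{u}=u) = w_{T_u}$, $Prob_{P'}(\hat{u}=v) = Prob_P(\hat{u}=v) - w_{T_u}$, and every other probability is unchanged. Third, the assignment $p(i)$ of a client $v_i$ to its closest path vertex changes only for $v_i \in T_u$: each such client is reassigned from $v$ to $u$, while all others keep $p'(i) = p(i)$.

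Plugging these into the definition of $\overline{d}_{P'}(\hat{u},\cdot)$ and using $d(p,u)=d(p,v)+c$, short computations give two uniform increments: $\overline{d}_{P'}(\hat{u},u) - \overline{d}_P(\hat{u},v) = (w(T)-w_{T_u})c$ and $\overline{d}_{P'}(\hat{u},p(i)) - \overline{d}_P(\hat{u},p(i)) = w_{T_u} c$ for every $p(i) \in P$. Dividing by $vt$ yields $s_i(P')-s_i(P) = (w(T)-w_{T_u})c/vt$ for $v_i \in T_u$ and $w_{T_u}c/vt$ otherwise. Summing $w_i(s_i(P')-s_i(P))$ over $i$ and collecting the two client classes (of total weights $w_{T_u}$ and $w(T)-w_{T_u}$) immediately gives the first identity, the symmetric factor $2 w_{T_u}(w(T)-w_{T_u})$ arising because the two total weights pair against the two increments in swapped order.

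For the second identity I would apply the elementary identity $s_i^2(P') - s_i^2(P) = 2\,s_i(P)\,(s_i(P')-s_i(P)) + (s_i(P')-s_i(P))^2$ and sum weighted by $w_i$. Since the increment $s_i(P')-s_i(P)$ is constant within each of the two client classes, the quadratic contribution is straightforward: it reduces to $\frac{c^2}{vt^2}\bigl[w_{T_u}(w(T)-w_{T_u})^2 + (w(T)-w_{T_u})w_{T_u}^2\bigr]$, and factoring out $w_{T_u}(w(T)-w_{T_u})$ leaves exactly the tail $\frac{1}{vt^2}w_{T_u}(w(T)-w_{T_u})w(T)d^2(P,u)$. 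The linear (cross) term is the main obstacle: after splitting by class it becomes a weighted combination of the partial sums $\sum_{v_i \in T_u} w_i s_i(P)$ and $\sum_{v_i \notin T_u} w_i s_i(P)$, and I would need to recombine these into $\overline{S}(P) = \sum_i w_i s_i(P)$ to produce the claimed factor $\frac{2}{vt} w_{T_u} d(P,u) \overline{S}(P)$. This final bookkeeping is the delicate step where the two class-specific coefficients must be reconciled with the single $\overline{S}(P)$ and where any simplifying convention (such as $w(T)=1$) would play a role.
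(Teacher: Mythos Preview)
Your approach is the paper's approach: both compute the increment of each $s_j$ upon adjoining $u$, obtaining $s_j(P')=s_j(P)+\frac{1}{vt}w_{T_u}d(P,u)$ for $j=1,\dots,k$ and $s_{k+1}(P')=s_k(P)+\frac{1}{vt}(w(T)-w_{T_u})d(P,u)$, then multiply by branch weights and sum. Your derivation of the first identity is complete and coincides with the paper's.

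For the second identity your hesitation is well founded, and the paper's proof has the same gap you suspect---it simply asserts the outcome after ``squaring both sides \ldots\ and adding''. Writing $c=d(P,u)$, expanding $s_j^2(P')$ and summing against the branch weights gives the cross term
\[
\frac{2c}{vt}\Bigl[\,w_{T_u}\sum_{j=1}^{k}w_{T_{p(j)}}s_j(P)\;+\;(w(T)-w_{T_u})\,w_{T_u}\,s_k(P)\Bigr],
\]
whereas $\frac{2c}{vt}w_{T_u}\overline{S}(P)=\frac{2c}{vt}w_{T_u}\bigl[\sum_{j=1}^{k}w_{T_{p(j)}}s_j(P)+w_{T_u}s_k(P)\bigr]$; these differ by $\frac{2c}{vt}(w(T)-2w_{T_u})\,w_{T_u}\,s_k(P)$, which is nonzero in general. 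So your two class-specific coefficients genuinely \emph{cannot} be recombined into the single factor $\overline{S}(P)$ without a residual term in $s_k(P)$ (the service time at the endpoint of $P$ adjacent to $u$). A concrete check confirms this: with $vt=1$, $G_i=0$, $P=\{p(1),p(2)\}$, $d(p(1),p(2))=d(p(2),u)=1$ and branch weights $0.3,0.3,0.4$, one gets $\overline{S}^2(P')=0.834$ directly, while the stated formula yields $0.786$. In short, your first-identity argument is correct and identical to the paper's; the obstacle you flag in the second identity is a real defect of the formula as stated, not a missing trick.
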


\begin{prf}
\begin{figure} \centering
\includegraphics[width=9cm]{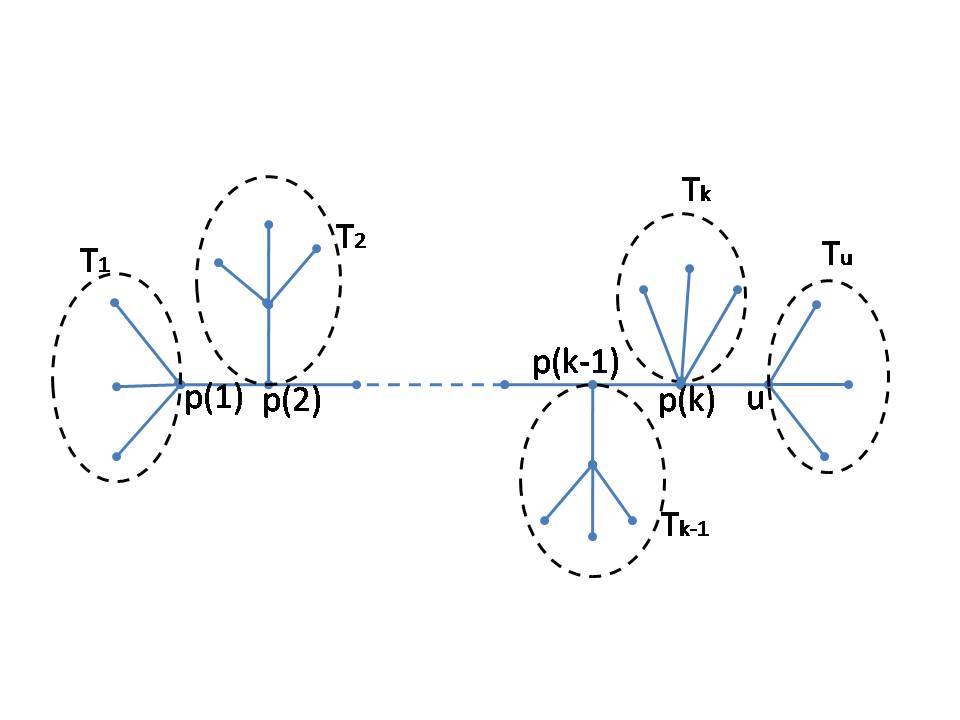}
\vspace{-1cm}
\caption{The branches of the path $P:p(1) - p(k)$}\label{tree2path}
\end{figure}
Consider Fig \ref{tree2path}, and let $T_{p(i)}$ be the branch associated to
the vertex $p(i)$, $i=1,...,k$ while $T_u$ be the branch associated to the vertex $u$.
The following equalities hold:
\begin{equation}\label{pext}
\begin{split}
& s_1(P')=s_1(P)+\frac{1}{vt}w_{T_u}d(P,u), \\
& s_2(P')=s_2(P)+\frac{1}{vt}w_{T_u}d(P,u), \\
 & \vdots \\
& s_k(P')=s_k(P)+\frac{1}{vt}w_{T_u}d(P,u),\\
& s_{k+1}(P') = s_k(P)+\frac{1}{vt}(w_{T_{p(1)}}+...+w_{T_{p(k)}})d(P,u).
\end{split}
\end{equation}
Multiplying both sides of the above equalities by $w_{T_{p(i)}}$
and adding them together yields the following equality:
\begin{equation*}
\begin{split}
\overline{S}(P')=\overline{S}(P) + \frac{2}{vt}w_{T_u}(w_{T_{p(1)}}+...+w_{T_{p(k)}})d(P,u) \\
= \overline{S}(P) + \frac{2}{vt}w_{T_u}(w(T)-w_{T_u})d(P,u).
\end{split}
\end{equation*}
Squaring both sides of the equalities \ref{pext}, multiplying them by $w_{T_{p(i)}}$
and adding the results together yields the following equality:
\begin{equation*}
\begin{split}
\overline{S}^2(P')=\overline{S}^2(P)+\frac{2}{vt}w_{T_u}d(P,u)\overline{S}(P) ~~~~~~~~~~~~~~\\ + \frac{1}{vt^2}w_{T_u}(w_{T_{p(1)}}+...+w_{T_{p(k)}})(w_{T_{p(1)}}+...+w_{T_{p(k)}}+w_{T_u})d^2(P,u) \\
= \overline{S}^2(P)+\frac{2}{vt}w_{T_u}d(P,u)\overline{S}(P)+ \frac{1}{vt^2}w_{T_u}(w(T)-w_{T_u})w(T)d^2(P,u).
\end{split}
\end{equation*}
\end{prf}

As a result, the queueing delay on the path $P'=P\cup \{u\}$ can significantly be computed, see Corollary \ref{qbar}.

\begin{cor}\label{qbar}
Let $P$ be a path on the tree $T$ while $P'=P\cup \{u\}$ and vertex $u$
is adjacent to the path $P$ then:

\begin{equation}
\begin{split}
\overline{Q}(P') = \frac{\lambda (\overline{S}^2(P) + \frac{2}{vt} w_{T_u}d(P,u) \overline{S}(P)+\frac{1}{vt^2}
w_{T_u}(w(T)-w_{T_u})w(T)d^2(P,u))}{2(1-\lambda (\overline{S}(P)+ \frac{2}{vt}w_{T_u}(w(T)-w_{T_u})d(P,u))}
\end{split}
\end{equation}
\end{cor}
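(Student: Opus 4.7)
The plan is to derive the closed form for $\overline{Q}(P')$ by direct substitution into the queueing formula (\ref{queue}), using the recursion relations supplied by Lemma \ref{sip}. Since Lemma \ref{sip} already expresses both $\overline{S}(P')$ and $\overline{S}^2(P')$ as $\overline{S}(P)$ (respectively $\overline{S}^2(P)$) plus explicit increments involving $w_{T_u}$, $d(P,u)$, $w(T)$ and $vt$, all that remains is to plug these two expressions into the defining equation of $\overline{Q}$ at the path $P'$.

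First, I would recall that whenever $1-\lambda \overline{S}(P')>0$ the first branch of (\ref{queue}) applies at $P'$, so
\[
\overline{Q}(P')=\frac{\lambda\,\overline{S}^2(P')}{2\bigl(1-\lambda\,\overline{S}(P')\bigr)}.
\]
Next, I would substitute the numerator using the second identity of Lemma \ref{sip}, which replaces $\overline{S}^2(P')$ by $\overline{S}^2(P)+\tfrac{2}{vt}w_{T_u}d(P,u)\overline{S}(P)+\tfrac{1}{vt^2}w_{T_u}(w(T)-w_{T_u})w(T)d^2(P,u)$, and analogously substitute the denominator using the first identity, which replaces $\overline{S}(P')$ by $\overline{S}(P)+\tfrac{2}{vt}w_{T_u}(w(T)-w_{T_u})d(P,u)$. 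Reading off the result gives exactly the claimed formula; no simplification is required because the statement in the corollary is presented in the unsimplified substituted form.

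I would also briefly note the implicit hypothesis that the denominator is positive, i.e.\ $\lambda\bigl(\overline{S}(P)+\tfrac{2}{vt}w_{T_u}(w(T)-w_{T_u})d(P,u)\bigr)<1$, so that we are in the first branch of (\ref{queue}); otherwise $\overline{Q}(P')=\infty$, mirroring the case distinction of (\ref{queue}). There is no real obstacle here: the entire argument is a one-line substitution on top of Lemma \ref{sip}, and the only place requiring any care is to keep the three-factor product $w_{T_u}(w(T)-w_{T_u})w(T)$ in the numerator distinct from the two-factor product $w_{T_u}(w(T)-w_{T_u})$ in the denominator when copying the expressions from the lemma.
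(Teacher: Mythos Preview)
Your proposal is correct and matches the paper's own treatment: the corollary is stated without a separate proof, as an immediate consequence of substituting the two identities of Lemma~\ref{sip} into the defining formula (\ref{queue}) for $\overline{Q}$. Your observation about the implicit positivity hypothesis on the denominator is a reasonable clarification that the paper leaves tacit.
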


As a result the following property is obtained from the definition
of $\overline{T}_1(P)$, Lemma \ref{qbar} and Corollary \ref{sip}.

\begin{cor}\label{pprim}
Let $P$ and $P'$ be two paths on the tree $T$ such that $P\subset P'$, then
the following equalities hold:
\begin{enumerate}
\item{$\overline{T}_1(P')\leq \overline{T}_1(P),~~\overline{T}_2(P)\leq\overline{T}_2(P')$}
\item{$\overline{S}(P)\leq\overline{S}(P')$, $\overline{S}^2(P)\leq\overline{S}^2(P')$}
\item{$\overline{Q}(P)\leq \overline{Q}(P')$}
\end{enumerate}
\end{cor}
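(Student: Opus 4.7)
The plan is to reduce the inclusion $P\subset P'$ to a sequence of one-vertex extensions and then invoke Lemma \ref{sip} and Corollary \ref{qbar} at each step. Since $P'$ is a path that contains the path $P$, I can enumerate the vertices of $P'\setminus P$ as $u_1,\dots,u_m$ in an order such that at stage $j$ the vertex $u_j$ is adjacent to the intermediate path $P_{j-1}=P\cup\{u_1,\dots,u_{j-1}\}$, with $P_0=P$ and $P_m=P'$. So it suffices to prove each inequality for a single extension $P\to P\cup\{u\}$ with $u$ adjacent to $P$, and then chain.

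For Part (1), I would treat the two inequalities separately. For the client travel time, for every $v_i\in T$ the set over which the minimum in $d(P',v_i)=\min_{u\in P'}d(u,v_i)$ is taken contains the one defining $d(P,v_i)$; hence $d(P',v_i)\le d(P,v_i)$ pointwise, and multiplying by the non-negative weights $w_i/vt$ and summing yields $\overline{T}_1(P')\le\overline{T}_1(P)$. The second inequality $\overline{T}_2(P)\le\overline{T}_2(P')$ follows from the identity $\overline{S}(P)=\overline{T}_2(P)+\overline{G}$ once Part (2) is established, since $\overline{G}$ is a constant.

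For Part (2), I would apply Lemma \ref{sip} to each extension step $P_{j-1}\to P_j$. The two increments
$$\tfrac{2}{vt}w_{T_{u_j}}\bigl(w(T)-w_{T_{u_j}}\bigr)d(P_{j-1},u_j)$$
and
$$\tfrac{2}{vt}w_{T_{u_j}}d(P_{j-1},u_j)\overline{S}(P_{j-1})+\tfrac{1}{vt^2}w_{T_{u_j}}\bigl(w(T)-w_{T_{u_j}}\bigr)w(T)d^2(P_{j-1},u_j)$$
are manifestly non-negative, because $0\le w_{T_{u_j}}\le w(T)$ and the distances, rates and weights are non-negative. Telescoping the stepwise non-negative increases gives $\overline{S}(P)\le\overline{S}(P')$ and $\overline{S}^2(P)\le\overline{S}^2(P')$.

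For Part (3), I would read off the monotonicity directly from the piecewise definition \eqref{queue}. Whenever $1-\lambda\overline{S}(\cdot)>0$, the queueing delay $\overline{Q}$ is increasing in $\overline{S}^2$ and decreasing in $1-\lambda\overline{S}$; by Part (2) the numerator in \eqref{queue} can only grow and the denominator can only shrink in going from $P$ to $P'$. So if $1-\lambda\overline{S}(P')>0$ both quantities are finite and the inequality is immediate, while if $1-\lambda\overline{S}(P')\le 0$ then $\overline{Q}(P')=\infty$ by definition and the inequality is trivial. No step poses a genuine obstacle; the only mild subtlety is to make sure the $\infty$ branch of \eqref{queue} is handled explicitly so that Part (3) holds in all cases, not just the stable one.
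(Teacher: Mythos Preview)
Your proposal is correct and matches the paper's approach: the paper states this corollary without a detailed proof, merely noting that it follows from the definition of $\overline{T}_1(P)$, Lemma~\ref{sip}, and Corollary~\ref{qbar}, which is precisely the reduction to one-vertex extensions that you carry out. Your write-up simply makes explicit the chaining and the handling of the $\infty$ branch in \eqref{queue} that the paper leaves implicit.
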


Now we are ready to provide a condition to compare the objective function on the path $P$ and $P'=P\cup \{ u\}$.

\begin{lem}\label{pathpu}
Let $P'=P \cup \{u\}$, where $P$ is a path and $u$ is a vertex adjacent to the path $P$.
If $d(P,u)>0$ and
\begin{equation}\label{betterp}
\alpha_1 + \alpha_2w_{T_u}\left(\frac{2}{vt}\beta (w(T)-w_{T_u}) -\frac{1}{vt}+\frac{\beta}{vt}\right) >0
\end{equation}
then $F(P')>F(P).$
\end{lem}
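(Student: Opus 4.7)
The plan is to compute $F(P')-F(P)$ directly from its definition and show that under the hypothesis the difference is strictly positive. Writing out
\[
F(P')-F(P)=\alpha_1\bigl(|P'|-|P|\bigr)+\alpha_2\beta\bigl(\overline{Q}(P')-\overline{Q}(P)\bigr)+\alpha_2\beta\bigl(\overline{T}_2(P')-\overline{T}_2(P)\bigr)+\alpha_2(1-\beta)\bigl(\overline{T}_1(P')-\overline{T}_1(P)\bigr),
\]
each of these four increments should be analyzed separately, with the queueing term requiring only a sign (not an exact formula).

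First I would handle the length and the two travel-time averages, each of which is linear in $d(P,u)$. We have $|P'|-|P|=d(P,u)$. For $\overline{T}_2$, Lemma \ref{sip} gives $\overline{S}(P')-\overline{S}(P)=\tfrac{2}{vt}w_{T_u}(w(T)-w_{T_u})d(P,u)$, and since $\overline{S}(P)=\overline{T}_2(P)+\overline{G}$, the same quantity equals $\overline{T}_2(P')-\overline{T}_2(P)$. For $\overline{T}_1$, the only vertices whose distance to the path changes when $u$ is added are those in the branch $T_u$; their distance to the path drops by exactly $d(P,u)$ (since $d(P,v)=d(P,u)+d(u,v)$ for $v\in T_u$ becomes $d(u,v)$). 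Hence
\[
\overline{T}_1(P')-\overline{T}_1(P)=-\frac{1}{vt}w_{T_u}d(P,u).
\]

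Next, for the queueing term, the key observation is that I do not need a formula, only a sign: Corollary \ref{pprim}(3) gives $\overline{Q}(P')\geq \overline{Q}(P)$, so $\alpha_2\beta(\overline{Q}(P')-\overline{Q}(P))\geq 0$. Combining everything and factoring $d(P,u)$ out of the remaining terms yields
\[
F(P')-F(P)=d(P,u)\Bigl[\alpha_1+\alpha_2 w_{T_u}\Bigl(\tfrac{2\beta}{vt}(w(T)-w_{T_u})-\tfrac{1-\beta}{vt}\Bigr)\Bigr]+\alpha_2\beta\bigl(\overline{Q}(P')-\overline{Q}(P)\bigr).
\]
A brief rearrangement shows the bracketed coefficient coincides with the left-hand side of hypothesis (\ref{betterp}), because $-\tfrac{1-\beta}{vt}=-\tfrac{1}{vt}+\tfrac{\beta}{vt}$. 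Since $d(P,u)>0$ and the bracket is strictly positive by assumption, while the queueing remainder is non-negative, the conclusion $F(P')>F(P)$ follows.

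The only subtle step is the handling of $\overline{Q}$: it is not a linear function of $d(P,u)$, so a fully explicit comparison would be messy. The conceptual move that makes the proof clean is realizing that monotonicity of $\overline{Q}$ under path extension (Corollary \ref{pprim}) is enough, since the hypothesis (\ref{betterp}) is designed to dominate only the \emph{linear} part of the increment; the queueing contribution, being non-negative, only helps.
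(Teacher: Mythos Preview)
Your proposal is correct and follows essentially the same approach as the paper: compute $F(P')-F(P)$ term by term, use Lemma~\ref{sip} for the $\overline{T}_2$ increment, a direct count for the $\overline{T}_1$ increment, and Corollary~\ref{pprim} for the sign of the queueing increment, then factor out $d(P,u)$ and apply the hypothesis. Your presentation is in fact slightly cleaner than the paper's, which expands $F(P')$ in full before subtracting, but the logic is identical.
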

\begin{prf}
The vertices of the path $P$ are assumed to be $p(1),...,p(k)$ while $d(P,u)=d(p(k),u)$, see Fig \ref{tree2path}.
Therefore, the vertices of path $P'$ are $p(1),p(2),...,p(k),p(k+1)=u$. The objective function
of the path $P'$ can be rewritten as follows:
\begin{equation*}
\begin{split}
& F(P')=\alpha_1|P'|+\alpha_2(\beta(\overline{Q}(P')+\overline{T}_2(P'))+\overline{G}+(1-\beta)\overline{T}_1(P')) \\
& =\alpha_1|P'|+\alpha_2\beta(\overline{Q}(P')+\overline{S}(P')-\overline{G})
+\alpha_2\overline{G}+\alpha_2(1-\beta)(\frac{1}{vt}\sum_{i=1}^{k+1}\sum_{v_j \in T_i}w_jd(p(i),v_j)). \\
\end{split}
\end{equation*}
Using Lemma \ref{sip}
\begin{equation*}
\begin{split}
& F(P')=\alpha_1(|P|+d(P,u))+\alpha_2\beta(\overline{Q}(P')+\overline{S}(P) + \frac{2}{vt}w_{T_u}(w(T)-w_{T_u})d(P,u)-\overline{G})\\
&+\alpha_2\overline{G}+\alpha_2(1-\beta)(\frac{1}{v}\sum_{i=1}^{k}\sum_{v_j \in T_i}w_jd(p(i),v_j)+\frac{1}{vt}\sum_{v_j \in T_u}w_jd(u,v_j)) \\
& =\alpha_1(|P|+d(P,u))+\alpha_2\beta(\overline{Q}(P')+\overline{S}(P) + \frac{2}{vt}w_{T_u}(w(T)-w_{T_u})d(P,u)-\overline{G})\\
&+\alpha_2\overline{G}+\alpha_2(1-\beta)\frac{1}{vt}(\sum_{i=1}^{k}\sum_{v_j \in T_i}w_jd(p(i),v_j)+\sum_{v_j \in T_u}w_j(d(p(k),v_j))-w_{T_u}d(P,u)) \\
&=\alpha_1|P|+\alpha_2\beta(\overline{Q}(P')+\overline{S}(P)-\overline{G})
+\alpha_2\overline{G}+\alpha_2(1-\beta)(\frac{1}{vt}\overline{T}_1(P)-w_{T_u}d(P,u))\\
&+ \alpha_2\frac{2\beta}{vt}w_{T_u}(w(T)-w_{T_u})d(P,u)+\alpha_1 d(P,u)\\
\end{split}
\end{equation*}
therefore
\begin{equation}
\begin{split}
&F(P')=F(P)+\alpha_1 d(P,u)+\alpha_2\beta (Q(P')-Q(P)) + \\
&\frac{2\alpha_2\beta}{vt} w_{T_u}(w(T)-w_{T_u})d(P,u) -(1-\beta)\frac{\alpha_2}{vt}w_{T_u}d(P,u)\\
&=F(P)+(\alpha_1+\frac{2\alpha_2\beta}{vt} w_{T_u}(w(T)-w_{T_u}) -(1-\beta)\frac{\alpha_2}{vt}(w_{T_u}))d(P,u)+\alpha_2\beta (Q(P')-Q(P))
\end{split}
\end{equation}
Since by Corollary \ref{pprim}, $Q(P')>Q(P)$ and $d(P,u)>0$ then if
$$\alpha_1 +\alpha_2 w_{T_u}\left(\frac{2}{vt}\beta (w(T)-w_{T_u}) -\frac{1}{vt}+ \frac{\beta}{vt}\right)>0$$
we conclude $F(P')>F(P)$ which completes the proof.
\end{prf}

Note that the condition \ref{betterp} in Lemma \ref{pathpu} is a sufficient
but not necessary condition, see Example \ref{exam1}.

\begin{exam}\label{exam1}
Consider the tree shown in Fig. \ref{exam-tree}. The numbers
written on edges indicate their lengths while demand rates, weights and service times of
vertices are presented in Table \ref{tablereq}. Table \ref{pathes} contains some paths and
their corresponding $\overline{T}_1$, $\overline{T}_2$, $\overline{Q}$. The head column objective
function includes $\alpha_1$ and $\beta$ with different values. The other variables are cruising speed $vt=1$, $\alpha_2=1$
$\lambda=0.34$, $\overline{G}=0.01$ and $w(T)=1$.

\begin{figure}[h]
  \centering
\includegraphics[width=11cm]{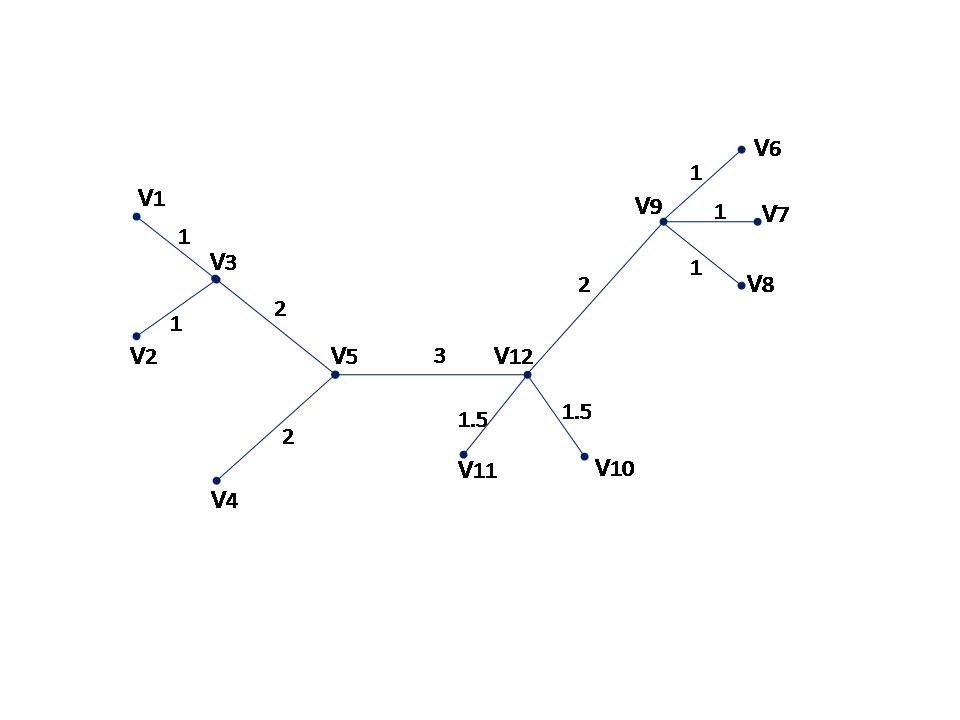}
\vspace{-1.5cm}
\caption{Tree for the Example \ref{exam1}. }\label{exam-tree}
\end{figure}

\begin{table}[h]
\centering
\tiny
\begin{tabular}{c|cccccccccccc}
vertices&$v_{1}$&$v_{2}$&$v_{3}$&$v_{4}$&$v_{5}$&$v_{6}$&$v_{7}$&$v_{8}$&$v_{9}$&$v_{10}$&$v_{11}$&$v_{12}$\\
\hline
$\lambda_i$&0.02&0.03&0.01&0.04&0.03&0.03&0.05&0.02&0.03&0.05&0.02&0.01\\
$w_i=\frac{\lambda_i}{\lambda}$&0.0588&0.0882&0.0294&0.1176&0.0882&0.0882&0.1471&0.0588&0.0882&0.1471&0.0588&0.0294\\
$G_i$&0.01&0.01&0.01&0.01&0.01&0.01&0.01&0.01&0.01&0.01&0.01&0.01\\
\end{tabular}
 \caption{The  demand rates and service times of vertices in
Example \ref{exam1}.}\label{tablereq}
\end{table}

At first consider two paths $P=\{v_5,v_{12}\}$ and $P'=P \cup \{v_9\}$ when $\alpha_1=\beta=0.1$.
Then $w_{T_{9}}=0.3824$ and $$\alpha_1 +\alpha_2 w_{T_{9}}(\frac{2}{vt}\beta (w(T)-w_{T_{9}}) -\frac{1}{vt}+\frac{\beta}{vt})=0.1-0.2969<0.$$
Despite the fact that condition (\ref{betterp}) of Lemma \ref{pathpu} is not satisfied for these two paths,
the inequality $F(P)<F(P')$ holds; therefore, this condition is not necessary.

Now consider two paths $P=\{v_5,v_{12}\}$ and $P'=P \cup \{v_9\}$ for the case $\alpha_1=0.5$ and $\beta=0.1$.
In this case the condition of Lemma \ref{pathpu} holds,
$$\alpha_1 + \alpha_2w_{T_{9}}(\frac{2}{vt}\beta (w(T)-w_{T_{9}}) -\frac{1}{vt}+\frac{\beta}{vt})=0.5-0.2969>0$$ and  also $F(P)<F(P')$.

Note that in the case $\alpha_1=\beta=0$ the problem is equivalent to the path-median problem.

\begin{table}
\centering
\begin{tabular}{|cccc|ccc|}
\hline
& & & &\multicolumn{3}{|c|}{ $objective~ function$} \\
\hline
& & & &$\alpha_1=0$&$\alpha_1=0.1$&$\alpha_1=0.5$\\
path&$\overline{T}_1$&$\overline{T}_2$&$\overline{Q}$&$\beta=0$&$\beta=0.1$&$\beta=0.1$\\
\hline
$\{v_1\}$&6.1324&0.0000&0.000017058&6.1424&5.5291&5.5291\\
$\{v_5\}$&3.9559&0.0000&0.000017058&3.9659&3.5703&3.5703\\
$\{v_{12}\}$&3.2500&0.0000&0.000017058&3.2600&2.9350&2.9350\\
$\{v_9\}$&3.7206&0.0000&0.000017058&3.7306&3.3585&3.3585\\
$\{v_5,v_{12}\}$&2.1029&1.4170&0.7112&2.1129&2.4155&3.6155\\
$\{v_{12},v_9\}$&2.4853&0.9446&0.2425&2.4953&2.5655&3.3655\\
$\{v_8,v_{9},v_6\}$&3.5733&0.2716&0.0319&3.5835&3.4565&4.2565\\
$\{v_1,v_{3},v_5\}$&3.5441&0.6920&0.1987&3.5541&3.5888&4.7888\\
$\{v_5,v_{12},v_{9}\}$&1.3382&2.3616&5.0013&1.3482&2.4507&4.4507\\
$\{v_3,v_5,v_{12}\}$&1.7500&1.9983&2.4618&1.7600&2.5310&4.5310\\
$\{v_3,v_5,v_{12},v_{9}\}$&0.9853&2.9429&$\infty$&0.9953&$\infty$&$\infty$\\
\hline
\end{tabular}
 \caption{The  objective functions of some paths in
Example \ref{exam1}.}\label{pathes}
\end{table}
\end{exam}

\section{Queue l-core problem}
In this section, the queue $l$-core problem is investigated in which
finding a path $P$ with fixed length $l$ that should be shifted
between two end vertices is the main problem. At first the variation of function
$s_i(P)=\frac{1}{vt}(\overline{d}_{P}(\hat{u},p(i))+G_i)$ is verified when the
position of path $P$ is changed between two end vertices $v_1$ and $v_2$.
For simplicity, in the rest of this paper it's assumed $G_i=0,~\alpha_1=0$ and $\alpha_2=1$.

\begin{lem}
Let $\overline{v}_1$ and $\overline{v}_2$ be two end vertices, and
$P$ be a path with fixed length $l$ that is shifted between
two vertices $\overline{v}_1$ and $\overline{v}_2$ by step size $d$. Then
\begin{equation}\label{ingred}ý
ý{s^{new}_i}(P) = \left\{\begin{array}{ccc}ý
                        ýs_i(P)-\frac{1}{vt}(w_{T_{p(2)}}+...+w_{T_{p(k)}})d &   if  & i=1 \\ý
                        ýs_{i+1}(P)-\frac{1}{vt}(w_{T_{p(1)}}-w_{T_{p(k+1)}})d & if & i=2,...,k-1 \\ý
                        ýs_i(P)+\frac{1}{vt}(w_{T_{p(2)}}+...+w_{T_{p(k)}})d & if & i=ký
                       ý\end{array}\right.
\end{equation}
\end{lem}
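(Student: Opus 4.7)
The plan is to prove all three cases by directly expanding $s^{new}_i$ from its definition $s_i(P)=\frac{1}{vt}\sum_{p(j)\in P} w_{T_{p(j)}}\,d(p(j),p(i))$ (using the simplifying $G_i=0$) and comparing, case by case, with the appropriate $s_j(P)$ on the old path.

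I first describe the structural change produced by a shift of step size $d$ toward $\overline{v}_2$. The new path $P^{new}$ drops the former end-vertex $p(1)$ and acquires a new end-vertex $p(k+1)$; the vertices $p(2),\ldots,p(k)$ remain on the path. The branches reorganize as follows: $T_{p(1)}$ is absorbed into the new branch at $p(2)$, so $w_{T^{new}_{p(2)}}=w_{T_{p(1)}}+w_{T_{p(2)}}$; the middle branches $T_{p(3)},\ldots,T_{p(k-1)}$ are unchanged; and the piece of $T_{p(k)}$ closer to $p(k+1)$ detaches to form $T_{p(k+1)}$, so $w_{T^{new}_{p(k)}}=w_{T_{p(k)}}-w_{T_{p(k+1)}}$. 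The step size $d$ is read as the common length of the two affected end-edges, $d=d(p(1),p(2))=d(p(k),p(k+1))$.

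With this in hand, each case is a short computation. For $i=2,\ldots,k-1$, the vertex $p(i+1)$ lies on both paths and the middle-branch contributions cancel against $s_{i+1}(P)$; the only surviving terms come from $T_{p(1)}$ and $T_{p(k+1)}$, and using $d(p(1),p(i+1))=d+d(p(2),p(i+1))$ together with $d(p(k+1),p(i+1))=d(p(k),p(i+1))+d$ produces exactly $-\tfrac{1}{vt}(w_{T_{p(1)}}-w_{T_{p(k+1)}})d$. For $i=1$, the relevant vertex in $P^{new}$ is $p(2)$; I compare with $s_1(P)$ via $d(p(j),p(1))=d(p(j),p(2))+d$ for all $j\geq 2$, collect the resulting $(w_{T_{p(2)}}+\cdots+w_{T_{p(k)}})d$ factor, and absorb the branch reshuffling at $p(2),p(k),p(k+1)$. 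The case $i=k$ is the mirror image: the vertex is $p(k+1)$, and the analogous distance splitting $d(p(j),p(k+1))=d(p(j),p(k))+d$ yields the $+\tfrac{1}{vt}(w_{T_{p(2)}}+\cdots+w_{T_{p(k)}})d$ correction.

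The main obstacle is bookkeeping: only the two end branches change, so the key is lining up which terms telescope against the old service time and which survive to produce the $d$-correction. Once that rearrangement is done correctly, each of the three identities is an immediate algebraic consequence.
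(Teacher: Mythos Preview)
Your approach is exactly the one the paper intends: the paper's own proof is a single sentence stating that the identities follow immediately by comparing the path in ``two consecutive cases'' after a shift of step size $d$, and your proposal simply carries out that direct comparison from the definition $s_i(P)=\tfrac{1}{vt}\sum_j w_{T_{p(j)}}d(p(j),p(i))$. Your structural description of the shift (drop $p(1)$, absorb $T_{p(1)}$ into the branch at $p(2)$, split off $T_{p(k+1)}$ from the old branch at $p(k)$) and the ensuing term-by-term bookkeeping are precisely the computation the paper leaves to the reader, so there is no methodological difference.
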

\begin{proof}
 The proof is straightforward, if we consider a path in two
consecutive cases, where the path $P$ is shifted by step size $d$, then the above relations hold.
\end{proof}
\begin{cor}
Let $\overline{v}_1$ and $\overline{v}_2$ be two end point vertices.
If the path $P$ is shifted between these two vertices then $\overline{S}(P)$ and
$\overline{S}^{2}(P)$ can be iteratively computed.
\end{cor}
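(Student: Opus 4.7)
\begin{prf}
The plan is to derive both iterates directly from the componentwise shift formula in equation (\ref{ingred}). Once $\overline{S}$ and $\overline{S}^2$ are available for the path $P$, the weights $w_{T_{p(i)}}^{new}$ attached to the shifted path differ from the old weights $w_{T_{p(i)}}$ only on the two endpoint branches, namely the branch that was just absorbed at the leading end of $P$ and the branch that was just released at the trailing end. All interior branches keep the same weight, because a shift by step size $d$ does not rebalance any interior subtree. Hence the update of the weight vector is an $O(1)$ operation per shift, and only the first and last summands in the definitions $\overline{S}(P)=\sum_i w_{T_{p(i)}} s_i(P)$ and $\overline{S}^2(P)=\sum_i w_{T_{p(i)}} s_i^2(P)$ need a non-generic correction.

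First I would compute $\overline{S}^{new}(P)$. Multiply each line of (\ref{ingred}) by the corresponding updated weight $w_{T_{p(i)}}^{new}$ and sum. For $i=2,\dots,k-1$ the $s_i$-terms combine to give exactly the interior portion of $\overline{S}(P)$, shifted by the common constant $\frac{d}{vt}(w_{T_{p(1)}}-w_{T_{p(k+1)}})$ times the sum of the interior weights; the $i=1$ and $i=k$ rows contribute the boundary corrections and account for the newly absorbed/released branches. The resulting identity expresses $\overline{S}^{new}(P)$ as $\overline{S}(P)$ plus a closed-form correction depending only on $d$, the two boundary branch weights, and the constants $w(T)$ and $vt$, which is computable in $O(1)$ from information already maintained along the path.

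Next I would repeat the argument with the squares. Squaring each row of (\ref{ingred}) produces three terms: $s_i^2$ (old value), the cross term $2s_i\cdot(\text{correction})$, and the square of the correction. Multiplying by $w_{T_{p(i)}}^{new}$ and summing, the $s_i^2$ rows collapse to $\overline{S}^2(P)$ (up to boundary corrections), the cross rows sum to an expression involving $\overline{S}(P)$ (again up to boundary corrections), and the squared-correction rows sum to a quantity of the form $C\, d^2$ with $C$ a simple polynomial in the branch weights. Combining these gives $\overline{S}^{2,new}(P)$ as $\overline{S}^2(P)$ plus an $O(1)$-computable correction, once $\overline{S}(P)$ is known.

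The main obstacle, and the only place where care is required, is the bookkeeping of the boundary branches: when $P$ shifts by $d$, one branch rooted at the old trailing vertex detaches from $P$ (its weight must be removed from the interior sum and reattached as a pendant contribution to $\overline{T}_1$), while a new branch at the leading vertex is absorbed. Once these two corrections are identified explicitly, the rest is a straightforward substitution into the formulas for $\overline{S}$ and $\overline{S}^2$, establishing that each shift can be processed in constant time and hence that $\overline{S}(P)$ and $\overline{S}^2(P)$ can be maintained iteratively along the sequence of shifted paths between $\overline{v}_1$ and $\overline{v}_2$.
\end{prf}
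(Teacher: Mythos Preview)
Your approach is exactly the one the paper intends: the Corollary is stated without proof, as an immediate consequence of the componentwise shift formula (\ref{ingred}), and the explicit iterate for $\overline{S}$ that later appears in step~6(d) of Algorithm~SQLC is precisely what one obtains by weighting the rows of (\ref{ingred}) by the branch weights and summing, just as you describe. Your treatment of $\overline{S}^2$ via squaring each row and collecting the $s_i^2$, cross, and squared-correction terms is likewise the natural elaboration of ``use equations (\ref{ingred})'' in that step.

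One small remark: you claim each shift can be processed in $O(1)$ time, whereas the paper, in the proof of Theorem~1, only asserts $O(h)$ time per update (with $h$ the tree height). Your stronger claim is plausible provided you also maintain the running interior-weight sum $\sum_{j=2}^{k-1} w_{T_{p(j)}}$ and handle the index shift $s_i^{new}\leftrightarrow s_{i+1}$ carefully when matching new branch weights to old component values; if you keep those auxiliary quantities updated, the $O(1)$ bound goes through, but the Corollary as stated only requires that an iterative (as opposed to from-scratch) computation exists, so this extra sharpness is not needed for the statement itself.
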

\begin{lem}\label{lem1}
Let $\overline{v}_1$ and $\overline{v}_2$ be two end vertices
and also let $P$ be a path with fixed length which is shifted along the path
connecting these two vertices. Then $\overline{S}(P)$ is an increasing
decreasing function.
\end{lem}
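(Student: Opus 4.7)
The plan is to use the update rule (\ref{ingred}) for the $s_i$ when the path is shifted by step size $d$, substitute into the definition $\overline{S}(P)=\sum_{i=1}^{k} w_{T_{p(i)}}\,s_i(P)+\overline{G}$, and then analyze the sign of the finite increment $\Delta\overline{S}:=\overline{S}(P^{\mathrm{new}})-\overline{S}(P)$. The underlying idea is that as $P$ slides from $\overline{v}_1$ toward $\overline{v}_2$, the branch mass leaving at the $\overline{v}_1$-end of the path and the branch mass entering at the $\overline{v}_2$-end change monotonically with the shift, so the sign of $\Delta\overline{S}$ can flip at most once, giving the first-monotone-then-monotone (unimodal) behaviour asserted in the lemma.

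First I would write $\overline{S}(P^{\mathrm{new}}) = \sum_{i=1}^{k} w_{T_{p(i)}^{\mathrm{new}}}\, s_i^{\mathrm{new}}(P)+\overline{G}$, noting that the new branch weights agree with the old ones on the interior of the path under the one-step relabelling, and differ only through the branch that detaches at the vanishing end-vertex $p(1)$ and the new branch that attaches at the incoming vertex $p(k+1)$. Substituting the three cases of (\ref{ingred}) into this sum and pairing each interior term $w_{T_{p(i)}^{\mathrm{new}}} s_i^{\mathrm{new}}$ with the corresponding relabelled old term $w_{T_{p(i+1)}} s_{i+1}$, the bulk of the expression telescopes and $\Delta\overline{S}$ collapses to a single linear-in-$d$ quantity whose coefficient is a weighted difference between the branch masses on the two sides of $P$.

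Finally I would argue that this coefficient, viewed as a function of the path's position along the $\overline{v}_1$--$\overline{v}_2$ line, is itself monotone: with each shift toward $\overline{v}_2$, the total branch mass lying strictly to the left of $P$ can only grow (it absorbs $w_{T_{p(1)}}$) and the mass strictly to the right can only shrink (it loses $w_{T_{p(k+1)}}$). Consequently the sign of $\Delta\overline{S}$ can stabilise in only one direction, which is precisely the lemma's claim. The main obstacle I anticipate is bookkeeping rather than any deep idea: the $s_i$ indices in (\ref{ingred}) shift by one for the interior vertices, while the branch-weight labels $w_{T_{p(i)}}$ also shift, and the boundary contributions at $p(1)$ and $p(k+1)$ have to be combined carefully with the mass of the branch that has just crossed the path. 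Once these three index shifts are aligned so that the telescoping produces a clean expression, the monotonicity of the imbalance quantity and hence the unimodality of $\overline{S}(P)$ follow without further calculation.
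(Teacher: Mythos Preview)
Your plan is essentially the paper's own argument. The paper does not route the computation through the recursion~(\ref{ingred}); instead it writes out $\overline{S}(P)$ explicitly at three consecutive positions of the sliding path (using the pairwise $w_{T_{p(i)}}w_{T_{p(j)}}d(p(i),p(j))$ expansion), subtracts, and obtains
\[
\overline{S}^{1}(P)-\overline{S}^{2}(P)=\frac{2d(p(1),p(k))}{vt}\bigl[w_{T_{p(1)}}-(w_{T_{p(k+1)}}+w_{T_{p(k+2)}})\bigr]\sum_{i=2}^{k}w_{T_{p(i)}},
\]
then notes that $w_{T_{p(1)}}>w_{T_{p(k+1)}}+w_{T_{p(k+2)}}$ forces $w_{T_{p(1)}}+w_{T_{p(2)}}>w_{T_{p(k+2)}}$, so the next increment keeps the same sign. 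That is exactly your ``left mass can only grow, right mass can only shrink'' monotonicity of the imbalance coefficient; you simply propose to reach the same increment formula via (\ref{ingred}) and a telescoping sum rather than by direct expansion. Either route gives the same one-line sign argument, and your anticipated bookkeeping difficulty (the simultaneous index shift in $s_i$ and in the branch weights at the two endpoints) is precisely what the paper sidesteps by expanding from scratch, at the cost of a longer formula.
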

\begin{proof}
\begin{figure}
  \centering
\includegraphics[width=9cm]{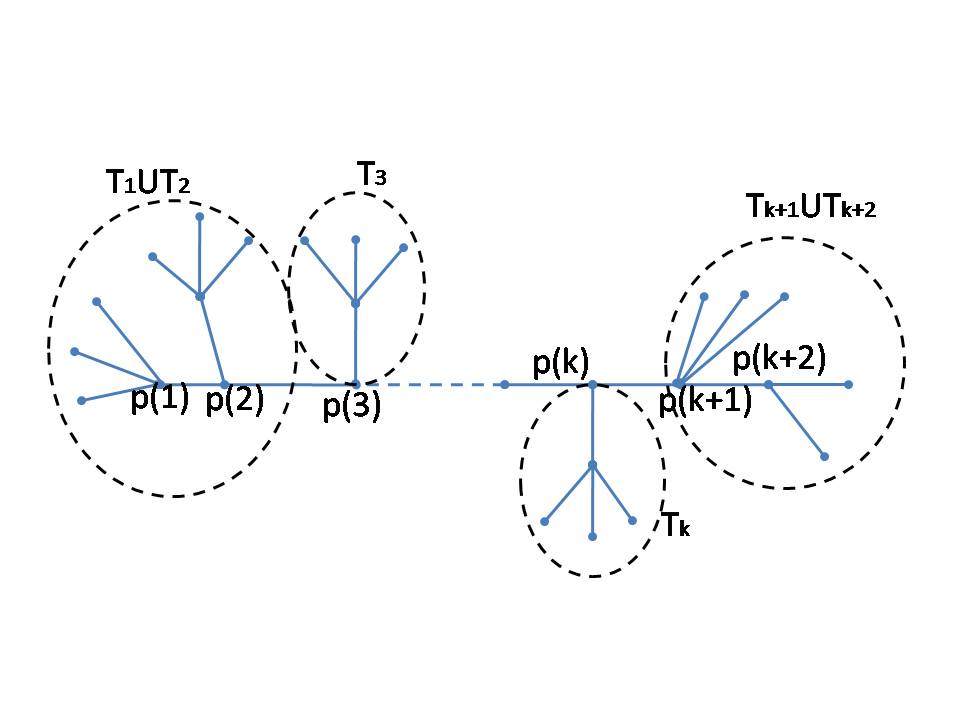}
\vspace{-1cm}
\caption{Tree for the Lemma \ref{lem1}. }\label{fixedpath}
\end{figure}
Consider the path $P:p(1)-p(k)$ as depicted in Fig \ref{fixedpath}.
 When the path $P$ is shifted from left to right three consecutive cases should be considered.
In first case, the weight of branch corresponding to vertex $p(k)$ is $w_{T_{p(k)}}+w_{T_{p(k+1)}}+w_{T_p(k+2)}$.
In second one, the weight of branch corresponding to vertex $p(2)$ is $w_{T_{p(1)}}+w_{T_{p(2)}}$ and the weight
of branches corresponding to the vertices $p(k)$ and $p(k+1)$ are $w_{T_{p(k)}}$ and $w_{T_{p(k+1)}}+w_{T_{p(k+2)}}$,
respectively. In third one, the weight of branches corresponding to vertices $p(3)$ and $p(k+2)$ are
$w_{T_{p(1)}}+w_{T_{p(2)}}+w_{T_{p(3)}}$ and $w_{T_{p(k+2)}}$. The weight of branches corresponding to all other vertices
in these three cases remain unchanged. The mean service time functions corresponding to
these three cases are represented as $\overline{S}^{1}(P)$, $\overline{S}^{2}(P)$ and $\overline{S}^{3}(P)$, respectively.

\begin{equation}\label{f1f2f3}
\begin{split}
\overline{S}^{1}(P) &= \frac{2}{vt} [w_{T_{p(1)}}\sum_{i=2}^{p(k-1)} w_{T_{p(i)}}d(p(i),p(1))+w_{T_{p(2)}}\sum_{i=3}^{k-1}w_{T_{p(i)}}d(p(i),p(2))+...\\
&+w_{T_{p(k-2)}}\sum_{i=k-1}^{k-1}w_{T_{p(i)}}d(p(i),p(k-2)) \\
&+(w_{T_{p(k)}}+w_{T_{p(k+1)}}+w_{T_{p(k+2)}})\sum_{i=1}^{k-1}w_{T_{p(i)}}d(p(i),p(k+1))]\\
&\overline{S}^{2}(P)=\frac{2}{vt}[(w_{T_{p(1)}}+ w_{T_{p(2)}})\sum_{i=3}^{k}w_{T_{p(i)}}d(p(i),p(2))+w_{T_{p(3)}}\sum_{i=4}^{k}w_{T_{p(i)}}d(p(i),p(3))+...\\
&+w_{T_{p(k-2)}}\sum_{i=k-1}^{k}w_{T_{p(i)}}d(p(i),p(k-2))
+w_{T_{p(k-1)}}\sum_{i=k}^{k}w_{T_{p(i)}}d(p(i),p(k-1))\\
&+(w_{T_{p(k+1)}}+ w_{T_{p(k+2)}})\sum_{i=2}^{k}w_{T_{p(i)}}d(p(i),p(k+1))]
\end{split}
\end{equation}

Since the length of path $P$ is fixed value $l$ then in foregoing equations $\overline{S}^{1}(P)$
and $\overline{S}^{2}(P)$ we have $d(p(1),p(k))=d(p(2),p(k+1))$; therefore, by subtracting them the following equation is given:
\begin{equation}
\begin{split}
\overline{S}^{1}(P)-\overline{S}^{2}(P)= & \frac{2d(p(1),p(k))}{vt}[w_{T_{p(1)}}(w_{T_{p(2)}}+...\\
&+w_{T_{p(k)}})-(w_{T_{p(k+1)}}+w_{T_{p(k+2)}})(w_{T_{p(2)}}+...+w_{T_{p(k)}})].
\end{split}
\end{equation}

If it is supposed $\overline{S}^{1}(P)-\overline{S}^{2}(P)>0$ then the inequality
$w_{T_{p(1)}}>w_{T_{p(k+1)}}+w_{T_{p(k+2)}}$ is attained. Moreover, applying the following conclusion:
$$w_{T_{p(1)}}>w_{T_{p(k+1)}}+w_{T_{p(k+2)}} \Rightarrow  w_{T_{p(1)}}+w_{T_{p(2)}}>w_{T_{p(k+2)}},$$
the inequality $\overline{S}^{3}(P)-\overline{S}^{2}(P)>0$ holds.

\end{proof}
\begin{cor}
Let the path $P$ be shifted between two end vertices $\overline{v}_1$ and $\overline{v}_2$ then the
mean service time function $\overline{S}(P)$ is minimized either at $\overline{v}_1$ or $\overline{v}_2$.
\end{cor}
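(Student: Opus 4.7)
The plan is to read the corollary as a direct, almost immediate consequence of Lemma \ref{lem1}. That lemma asserts that as $P$ of fixed length slides along the unique $\overline{v}_1$--$\overline{v}_2$ path, the function $\overline{S}(P)$ is ``increasing then decreasing'' (unimodal with at most one interior peak, possibly degenerating to monotone). The corollary then just says: a function of that shape on a one-dimensional domain attains its minimum at an endpoint of the domain.

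The first step is to parameterize the shift. I would let $P_0,P_1,\dots,P_m$ denote the discrete sequence of shifted paths, with $P_0$ touching $\overline{v}_1$ and $P_m$ touching $\overline{v}_2$, so that each transition $P_{j-1}\to P_j$ is exactly one elementary shift of step size $d$ as in the recursion \eqref{ingred}. Using Lemma \ref{lem1}, the finite sequence $\overline{S}(P_0),\overline{S}(P_1),\dots,\overline{S}(P_m)$ is unimodal-concave: there exists an index $j^\ast\in\{0,\dots,m\}$ such that $\overline{S}(P_0)\le\cdots\le \overline{S}(P_{j^\ast})\ge \cdots\ge \overline{S}(P_m)$. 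For such a sequence it is trivial that
\begin{equation*}
\min_{0\le j\le m}\overline{S}(P_j)=\min\bigl\{\overline{S}(P_0),\overline{S}(P_m)\bigr\},
\end{equation*}
since every interior value is sandwiched between the peak $\overline{S}(P_{j^\ast})$ and the larger of the two endpoint values.

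The second step is simply to identify $P_0$ and $P_m$ with the positions of the path at $\overline{v}_1$ and $\overline{v}_2$, and conclude that the minimizer of $\overline{S}$ over the family of shifted paths is whichever of these two endpoint positions realizes the smaller value of $\overline{S}$.

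I do not expect any genuine obstacle here; the whole content has been discharged in Lemma \ref{lem1} and its preceding recursion for $s_i^{\mathrm{new}}(P)$. The only thing one should be careful about is covering the edge cases where $\overline{S}$ is monotone (the ``peak'' is itself an endpoint) and where several consecutive $\overline{S}(P_j)$ are equal to the minimum; both situations fit trivially into the same unimodal template, so no separate argument is needed.
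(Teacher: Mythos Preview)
Your proposal is correct and matches the paper's approach: the paper states this corollary without proof, treating it as immediate from Lemma~\ref{lem1}, and your argument simply spells out the elementary observation that an increasing-then-decreasing sequence attains its minimum at one of its endpoints. There is nothing to add.
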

It seems the mean service time function $\overline{S}(P)$ is concave, but the
following counter-example shows that this derivation is not true in general.

\begin{figure}
  \centering
\includegraphics[width=9cm]{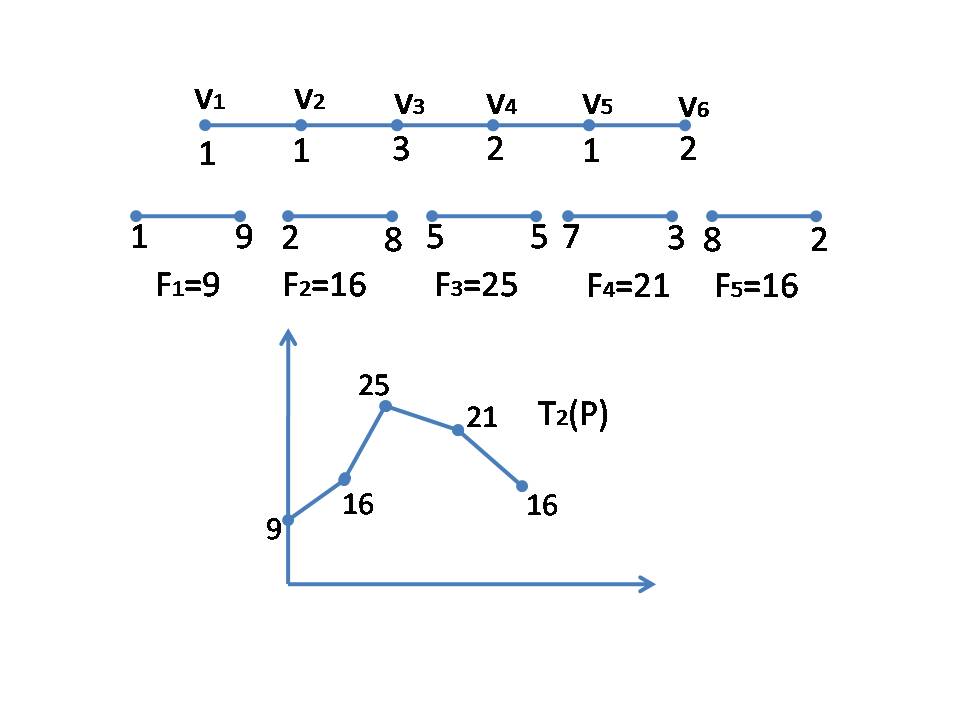}
\vspace{-1cm}
\caption{Counter Example \ref{exam1}. }\label{counterexam}
\end{figure}

\begin{exam}
Consider the figure depicted in Fig \ref{counterexam} The length of all edges are equal to one and
the weights of vertices are written next to the vertices. The results are illustrated
in Fig \ref{counterexam} which shows the function $\overline{S}(P)$, is increasing decreasing (but not concave).
The function $\overline{S}(P)$ takes its minimum value at vertex $v_1$.
\end{exam}

\section{Algorithm l-core}
In this section, an algorithm to find the stochastic queue l-core is provided.
The basic idea is based on finding the best $l$-core, passing
through a vertex $q$. In order to find the best path $P^{*}$
passing through the vertex $q$, it's considered the tree rooted in $q$ and use a bottom up iterative algorithm.
In the algorithm, some notations and definitions are required, which are defined below.

Let $q$ be the root of the tree $T$
while $child(q)$ be the set of its children. The parent of an arbitrary vertex $v$
is denoted by $par(v)$ while the subtree rooted at vertex $v$ is shown by $T_v$.
In the algorithm, the notation $(v,k,v_1,...,v_h)$ is used
as a path whose fixed length is $k$, started from the vertex $v$,
pass through vertices $v_1,...,v_{h-1}$ and terminated either on the edge $e_{v_{h-1},v_{h}}$ or on the vertex $v_h$.
Two paths $P_1$ and $P_2$ are called adjacent, if their vertices sets
are $(v_1,v_2,v_3,...,v_{h-1},v_{h})$ and $(v_2,v_3,...,v_{h-1},v_h,v_{h+1})$, respectively.

\textbf{Algorithm[SQLC]}\label{sqlc}
\begin{enumerate}
\item{Let the tree $T$ be rooted at the vertex $q$ while the path $P$ whose fixed length is $l$ pass
through the vertex $q$.}
\item{Let the children set of each vertex $v$ be $child(v)=\{v_1,...,v_c\}$.}
\item{If $v$ is a leaf, set $sum_{d}(v)=0$ and $\overline{w}_{down}(v)=w(v)$.}
\item{For every vertex $v$ of the tree $T$ compute the following values:
\begin{enumerate}
\item{$\overline{w}_{down}(v)=\sum_{i\in T_v}w_i=\sum_{v_i \in child(v)}\overline{w}_{down}(v_i)$.}
\item{$sum_d(v)=\sum_{v_i \in child(v)}\{sum_d(v_i)+d(v,v_i)\overline{w}_{down}(v_i)\}$}
\item{$sum^*_{d}(v)=sum_d(v)+\overline{w}_{down}(v)d(v,par(v))$}
\end{enumerate}}
\item{For each vertex $v$ of tree $T$ and for $k=0,1,...,l$ do the following:}
\begin{enumerate}
\item{If $k=0$ then set $down_c(v,k)=sum_d(v)$.}
\item{Else for every path $(v,k,v_1,...,v_h)$ compute $$down_c(v,k,v_1,...,v_h)=down_c(v,d(v,v_{h-1}),v_1,...,v_{h-1})
-\overline{w}_d(v_h)(k-d(v,v_{h-1}))$$} and
\begin{enumerate}
\item{If $k<d(v,par(v))$ then $down_c^{*}(v,k,v_1,...,v_h)=sum^{*}_{d}(v)-\overline{w}_{d}(v)(k-d(v,par(v)))$}
\item{Else $down_c^{*}(v,k,v_1,...,v_{h})=down_c(v_1,k-d(v,par(v)),v_2,...,v_h)$}
\end{enumerate}
\end{enumerate}
\item{For all end vertices, as $v_1$ and $v_2$, where $d(v_1,v_2)\geq l$ do the following:}
\begin{enumerate}
\item{Let $\{P_1,...,P_k\}$ be the set of all paths between $v_1$ and $v_2$ with length $l$ passing through $q$ those are
found by the previous step.}
\item{Let $P_{i}$ and $P_{i+1}$ for $i=1,...,k-1$, be two adjacent paths.}
\item{Let the path $P_i$ be represented as, $P_{i}:v_{i1}-v_{ih_i}$ for $i=1,...,k.$}
\begin{enumerate}
\item {Set $\overline{S}(P_1)=\overline{S}^2(P_1)=0$}
\item{For $j=1$ to $h_1$ do the following}
\begin{enumerate}
\item{$\overline{S}(P_1)=\overline{S}(P_1)+\frac{2}{vt}w_{T_{v_{1j}}}(w(T)-w_{T_{v_{1j}}})d(v_{1j-1},v_{1j}).$}
\item{$ \overline{S}^2(P_1) = \overline{S}^2(P_1)+\frac{2}{vt}w_{T_{v_{1j}}}d(v_{1j-1},v_{1j})\overline{S}(P_1)+
\frac{1}{{vt}^2}w_{T_{v_{1j}}}(w(T)-w_{T_{v_{1j}}})w(T)d^2(v_{1j-1},v_{1j}).$}
\end{enumerate}
\end{enumerate}
\item{For $i=2$ to $k$, Compute $\overline{S}(P_i)$ and $\overline{S}^{2}(P_i)$ as follows:}
\begin{enumerate}
\item{$\overline{S}(P_{i+1})=\overline{S}(P_i)+\frac{1}{vt}(w_{T_{v_{i1}}}-w_{T_{v_{i+1h_{i+1}}}})(w_{T_{v_{i2}}}+...+w_{T_{v_{ih_{i}}}})d(v_{i1},v_{ih_{i}}).$}
\item{Compute $\overline{S}^{2}(P_{i+1})$ using equations $\ref{ingred}$.}
\end{enumerate}
\end{enumerate}
\item{For each path $P_i$, $i=1,...,k$ with length $l$ set}
\begin{enumerate}
\item{$\overline{Q}(P_i)=\frac{\lambda\overline{S}^{2}(P_i)}{2(1-\lambda\overline{S}(P_i))}.$}
\item{$\overline{F}(P)=\overline{Q}(P_i)+\overline{TR}(P_i)$.}
\end{enumerate}
\item{Returns the path $P_h$, where $F(P_h)=\min\{F(P_i), i=1,...,k\}$, as the optimal solution.}
\end{enumerate}

\begin{thm}
The algorithm $SQLC$ finds the stochastic queue $l$-core of the tree $T$ in $\max\{O(n^2l), O(n^2log^2n)\}$ time.
\end{thm}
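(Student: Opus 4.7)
My plan is to argue in three layers. First, any path in $T$ contains at least one vertex, so by running the inner procedure with every $q\in V$ as pivot, some iteration has $q$ on the optimal $l$-core and the optimum therefore appears among the candidates examined. Second, for a fixed pivot $q$ I would verify by a bottom-up induction on depth that $\overline{w}_{down}(v)$, $sum_d(v)$, and $sum_d^\ast(v)$ produced in Step~4 are exactly the subtree weight and the two weighted-distance aggregates claimed, and that the recurrences of Step~5 correctly propagate $down_c(\,\cdot\,)$ and $down_c^\ast(\,\cdot\,)$ so as to deliver $\overline{T}_1(P)$ for every candidate path $P$ of length $l$ through $q$. Third, Step~6 evaluates $\overline{S}(P_1)$ and $\overline{S}^2(P_1)$ on the first candidate by direct edge-by-edge application of Lemma~\ref{sip}, and the update rules for $P_i \to P_{i+1}$ in Step~7 are exactly the single-step shift relations from Equation~(\ref{ingred}). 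With $\overline{S}(P_i)$ and $\overline{S}^2(P_i)$ in hand, $\overline{Q}(P_i)$ comes from~(\ref{queue}), $\overline{T}_2(P_i) = \overline{S}(P_i) - \overline{G}$, and the final minimization in Step~8 returns a true optimum because it ranges over a superset of all paths of length $l$ in $T$.

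\textbf{Complexity.} I would analyse the work per pivot $q$ and then multiply by $n$. Steps~1--4 are a single post-order traversal in $O(n)$. The candidate-path enumeration in Step~5 admits a straightforward $O(nl)$ bound, because each of the $O(n)$ root-to-leaf paths contributes $O(l)$ table entries and each entry is produced by a constant-time recurrence. Using the balanced path-decomposition technique of \cite{BCLSS02,ALST01}, already used for the classical $l$-core on a weighted tree, the same information can alternatively be maintained in $O(n\log^2 n)$ per pivot. Steps~6 and~7 traverse a sequence of $O(n)$ pairwise adjacent candidate paths; by Equation~(\ref{ingred}) each adjacency update is $O(1)$, and each $\overline{Q}(P_i)$, $F(P_i)$ evaluation is also $O(1)$. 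Summing over steps gives a per-pivot cost of $\max\{O(nl),O(n\log^2 n)\}$, and multiplying by the $n$ choices of $q$ yields the claimed $\max\{O(n^2 l),O(n^2\log^2 n)\}$.

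\textbf{Main obstacle.} The principal technical difficulty is the $O(n\log^2 n)$ bound per pivot. Establishing it requires importing the heavy-path decomposition machinery of~\cite{BCLSS02,ALST01} and checking that the additional quantities carried here, namely $\overline{S}(P)$ and $\overline{S}^2(P)$ alongside $\overline{T}_1(P)$, still admit the same $O(\log^2 n)$ amortised update per shift, without inflating the cost. The naive $O(n^2 l)$ branch, by contrast, follows from a direct charging argument on the dynamic-programming table. Accordingly, I would present the $O(nl)$ per-pivot bound in full detail and only sketch the reduction to the balanced-decomposition framework for the sharper $O(n\log^2 n)$ alternative, then take the maximum of the two to obtain the stated running time.
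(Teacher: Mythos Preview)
Your decomposition differs from the paper's, and two of your per-step bounds are not justified.

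The paper does not iterate over $n$ pivots; it analyses a single run rooted at $q$. The $O(n^2 l)$ term is the cost of Step~5 by itself, charged as at most $O(n)$ downward paths from each of the $n$ vertices, times $l{+}1$ values of $k$. The $O(n^2\log^2 n)$ term is the cost of Step~6 by itself: $O(n^2)$ ordered pairs of end vertices, for each pair at most $O(h)$ positions of the length-$l$ window that contain $q$, and $O(h)$ work to slide the window one step, where $h$ is the height of the rooted tree. The $\log^2 n$ enters because the paper invokes the tree-compression result of Alstrup et~al.\ to force $h=O(\log n)$; no heavy-path decomposition is used, and the $\overline{S},\overline{S}^2$ quantities are not maintained under any amortised scheme.

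Against this, your per-pivot $O(nl)$ for Step~5 is obtained by counting only the $O(n)$ root-to-leaf paths, but the recurrence in Step~5 fills $down_c(v,k,\ldots)$ for \emph{every} vertex $v$ and every downward path from $v$, which is a strictly larger set. Your $O(1)$ per adjacency update for $\overline{S}^2$ is also unsupported: equation~(\ref{ingred}) updates each $s_i$ along the current path separately, so recomputing $\overline{S}^2=\sum_i w_{T_{p(i)}} s_i^2$ costs $O(h)$ per shift, which is exactly what the paper charges before applying compression. If you wrap the single-root procedure in an outer loop over all $n$ pivots, as your correctness layer requires, the paper's own per-root bounds would yield $O(n^3 l)$ and $O(n^3\log^2 n)$; reaching the stated running time along your route therefore needs sharper per-pivot estimates than the ones you have supplied.
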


\begin{proof}
Step 4 can be done in $O(n)$ time. The main term in step 5 is computing $down_c$ which should be computed for every path with length $k$. The number of these paths that start from a vertex is at most $O(n)$. Since step 5 runs for $k=0,...,l$ then this step can be done in $O(n^2l)$ time. In step 6 we consider every two end vertices such as $v_1$ and $v_2$, that $d(v_1,v_2)\geq l$. The number of these pairs is $O(n^2)$. Then we should compute $\overline{S}(P)$ and $\overline{S}^{2}(P)$ for every path $P$ with length $l$ that lies between two end vertices and passes through the root $q$. Since these pathes should be pass through the root, number of them for each pair of end vertices is at most $O(h)$ where $h$ is the height of the tree. As Alstrup et al. \cite{ALST01} have been shown the tree can be compressed so that $h=O(log n)$. The time needed for computing $\overline{S}(P_{i+1})$ and $\overline{S}^{2}(P_{i+1})$ in part (d) of step 6 is $O(h)$. Therefore step 6 can be done in $O(n^2log^2n)$ time. Step 7 should be run for every path with length $l$ that have been found in the previous steps. Therefore the time complexity of this step dose not exceed of those steps 5 and 6.
\end{proof}

\subsection{Numerical examples}
\begin{figure}
  \centering
  \vspace{-2cm}
\includegraphics[width=14cm]{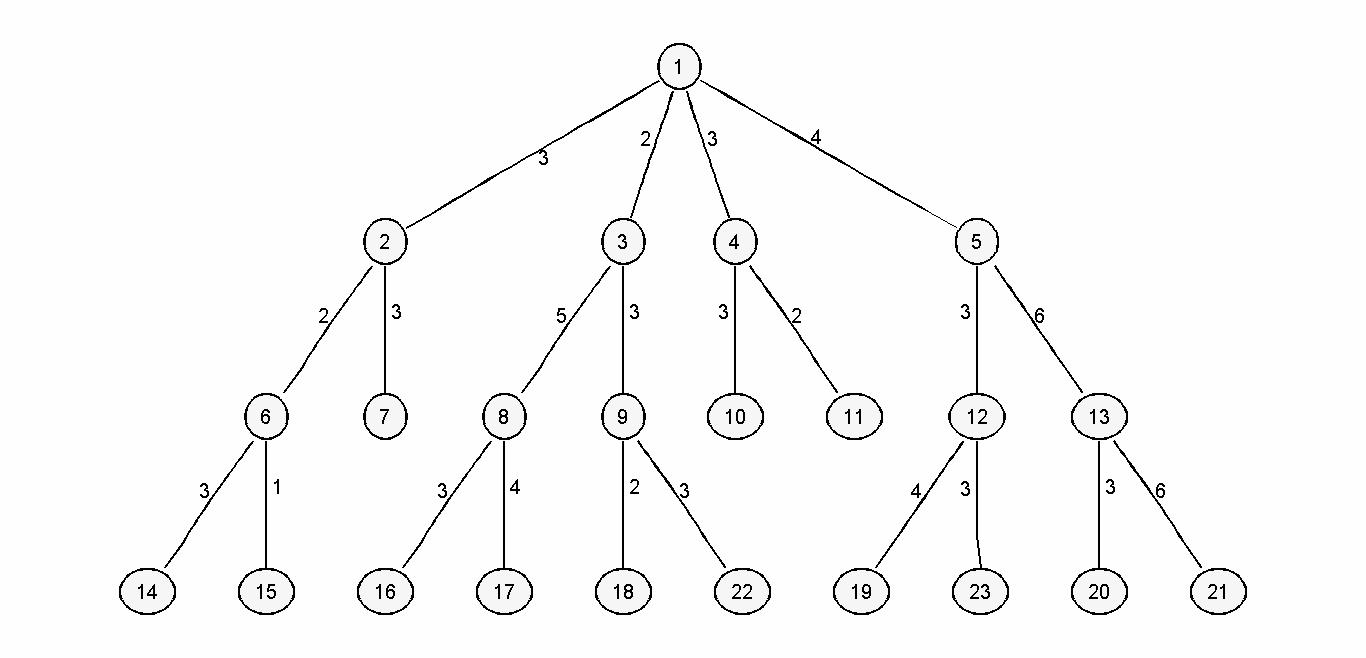}
\caption{An example. }\label{exam-core}
\end{figure}

In this section, first the stochastic queue $l$-core of the tree represented in
Fig \ref{exam-core} for l=1,2,...,6 is verified. The algorithm was written in MATLAB and run on a PC
with a Pentium IV processor with 1 GB of RAM and CPU 2 GHz. We consider the root of the tree is vertex $v_1$.

In Fig \ref{exam-core}, the numbers written next to the edge are represented
their lengths.
The weights of vertices are equal and considered to be $w_i=\frac{1}{num}$,
where $num$ is the number of vertices.
\begin{table}[h]
\centering
\begin{tabular}{|c||ccccccccc|}
\hline
&\multicolumn{9}{|c|}{$|core|$} \\ \hline
$\lambda$ & $1$ & $2$ & $3$ & $4$ & $5$ & $6$ & $7$ & $8$ & $9$\\ \hline
$0.1$ & $6.80$     & $6.54$     & $6.33$     & $6.16$     & $6.05$     & $6.02$     & $6.03$     & $6.07$     & $6.13$   \\
$0.2$ & $6.82$     & $6.62$     & $6.55$     & $6.60$     & $6.70$     & $6.82$     & $6.92$     & $7.06$     & $7.21$   \\
$0.3$ & $6.85$     & $6.72$     & $6.84$     & $7.01$     & $7.17$     & $7.33$     & $7.54$     & $7.99$     & $8.83$   \\
$0.4$ & $6.88$     & $6.84$     & $7.02$     & $7.38$     & $7.60$     & $7.88$     & $8.35$     & $9.73$     & $13.26$   \\
$0.5$ & $6.91$     & $6.95$     & $7.22$     & $7.69$     & $8.03$     & $8.50$     & $9.73$     & $14.37$    & $88.06$   \\
$0.6$ & $6.95$     & $7.04$     & $7.45$     & $8.07$     & $8.61$     & $9.38$     & $12.58$    & $63.36$    & $\infty$   \\
$0.7$ & $6.99$     & $7.13$     & $7.77$     & $8.59$     & $9.42$     & $10.72$    & $21.97$    & $\infty$   & $\infty$   \\
$0.8$ & $7.03$     & $7.23$     & $8.22$     & $9.30$     & $10.67$    & $13.05$    & $\infty$   & $\infty$   & $\infty$   \\
$0.9$ & $7.03$     & $7.23$     & $8.22$     & $9.30$     & $10.67$    & $13.05$    & $\infty$   & $\infty$   & $\infty$  \\ \hline
&\multicolumn{9}{|c|}{$|core|$} \\ \hline
$\lambda$ & $10$ & $11$ & $12$ & $13$ & $14$ & $15$ & $16$ & $17$ & $18$\\ \hline
$0.1$ & $6.18$     & $6.24$     & $6.31$     & $6.35$     & $6.41$     & $6.58$     & $6.61$     & $6.83$   & $7.12$ \\
$0.2$ & $7.37$     & $7.52$     & $7.71$     & $8.43$     & $8.76$     & $10.41$    & $11.09$    & $11.81$  & $18.92$ \\
$0.3$ & $9.29$     & $9.86$     & $10.55$    & $15.01$    & $17.02$    & $44.86$    & $68.28$    & $136.24$ & $\infty$ \\
$0.4$ & $15.42$    & $18.64$    & $23.83$    & $\infty$   & $\infty$   & $\infty$   & $\infty$   & $\infty$ & $\infty$ \\
$0.5$ & $\infty$   & $\infty$   & $\infty$   & $\infty$   & $\infty$   & $\infty$   & $\infty$   & $\infty$ & $\infty$ \\
$0.6$ & $\infty$   & $\infty$   & $\infty$   & $\infty$   & $\infty$   & $\infty$   & $\infty$   & $\infty$ & $\infty$ \\
$0.7$ & $\infty$   & $\infty$   & $\infty$   & $\infty$   & $\infty$   & $\infty$   & $\infty$   & $\infty$ & $\infty$ \\
$0.8$ & $\infty$   & $\infty$   & $\infty$   & $\infty$   & $\infty$   & $\infty$   & $\infty$   & $\infty$ & $\infty$ \\
$0.9$ & $\infty$   & $\infty$   & $\infty$   & $\infty$   & $\infty$   & $\infty$   & $\infty$   & $\infty$ & $\infty$ \\ \hline
\end{tabular}
 \caption{The objective values for Example}\label{wexam-core}
 \end{table}
As the table gives, when the core length and the arrival rate values are raised
the delay time goes to infinity. If it is supposed the arrival rate is a fixed
value and the length of the core is increased, generally the objective value
increases and may goes to infinity. Similarly if it's supposed the length of the core
is a fixed value and the arrival rate is increased the objective value increases and
may goes to infinity.

\begin{table}
\centering
{
\begin{tabular}{|c||ccc||ccc||ccc|c|}
\hline
Number of&\multicolumn{3}{|c|}{ $\lambda=0.1$, $|core|$}&\multicolumn{3}{|c|}{$\lambda=0.4$,$|core|$}&\multicolumn{3}{|c|}{$\lambda=0.8$,$|core|$}& cpu time\\
vertices &4           & 10          &  16          & 4           &10             &16              & 4             & 10            & 16      & \\\hline
$20$     &4.13   &  4.13            &4.80          & 5.34        &$\infty$       &$\infty$        & 6.08          &$\infty$       &$\infty$  & 0.20 \\
$20$     &3.74   &  3.67            &4.14          &4.38         &11.44          &$\infty$        &5.40           &$\infty$       &$\infty$  & 0.19 \\
$20$     &4.83   &  4.81            &5.42          &5.80         &9.13           &$\infty$        &6.68           &$\infty$       &$\infty$  & 0.21\\ \hline
$30$     &6.95   &  6.93            &7.50          &8.91         &22.67          &$\infty$        &$\infty$       &$\infty$       &$\infty$  & 0.25 \\
$30$     &7.41   &  7.61            &8.52          &14.64        &$\infty$       &$\infty$        &$\infty$       &$\infty$       &$\infty$  & 0.26  \\
$30$     &6.87   &  8.01           &8.02         &14.09        &$\infty$       &$\infty$        &$\infty$       &$\infty$       &$\infty$    & 0.25  \\ \hline
$50$     &9.91   &  9.82            &10.28         &11.07        &$\infty$       &$\infty$        &11.57          &$\infty$       &$\infty$  & 0.76 \\
$50$     &8.21   &  8.11            &8.33          &9.11         &12.41          &$\infty$        &10.03          &$\infty$       &$\infty$  & 0.60  \\
$50$     &7.29   &  7.25            &7.70          &8.26         &10.47          &$\infty$        &10.28          &$\infty$       &$\infty$  & 0.54  \\ \hline
$90$     &11.96   &  11.83          &11.92         &12.69        &13.60          &14.52           &13.38          &19.22          &$\infty$  & 3.45  \\
$90$     &11.56   &  11.46          &11.63         &12.96        &12.63          &13.53           &12.38          &17.86          &$\infty$  & 4.59  \\
$90$     &12.92   &  12.88          &12.99         &13.56        &14.54          &$\infty$        &15.98          &44.24          &$\infty$  & 4.23   \\ \hline
$140$    &15.43  &   15.38          &15.55         &22.87        &$\infty$       &$\infty$        &$\infty$       &$\infty$       &$\infty$  & 16.48  \\
$140$    &16.48  &   16.74          &17.24         &23.92        &$\infty$       &$\infty$        &$\infty$       &$\infty$       &$\infty$  & 13.08  \\
$140$    &12.55  &   12.53          &12.69         &14.75        &16.65          &22.91           &$\infty$       &$\infty$       &$\infty$  & 14.94  \\ \hline
$200$    &15.30  &   15.15          &15.43         &16.43        &$\infty$       &$\infty$        &16.70          &$\infty$       &$\infty$  & 158.60 \\
$200$    &13.87  &   13.71          &8.87          &14.63        &14.93          &17.35           &13.85          &16.59          &$\infty$  & 144.92 \\
$200$    &12.69  &   12.60          &17.72         &13.63        &15.16          &17.58           &13.89          &$\infty$       &$\infty$  & 144.55 \\ \hline
$250$    &15.36  &   15.17          &15.24         &16.49        &$\infty$       &$\infty$        &16.74          &$\infty$       &$\infty$  & 231.69 \\
$250$    &14.46  &   14.72          &15.10         &21.93        &$\infty$       &$\infty$        &$\infty$       &$\infty$       &$\infty$  & 213.91\\
$250$    &12.50  &   12.39          &12.56         &13.48        &13.54          &33.46           &16.72          &18.97          &$\infty$  & 200.48\\ \hline
$500$    &18.49  &   18.52          &18.58         &20.74        &23.32          &27.87           &$\infty$       &$\infty$       &$\infty$  & 383.90\\
$500$    &20.62  &   20.64          &20.63         &20.72        &21.56          &21.64           &35.78          &42.39          &$\infty$  & 393.60\\
$500$    &21.31  &   22.14          &21.34         &28.80        &28.83          &$\infty$        &$\infty$       &$\infty$       &$\infty$  & 357.10 \\ \hline
$800$    &22.57  &   22.43          &23.62         &23.64        &24.90          &$\infty$        &27.71          &35.18          &$\infty$  & 443.36\\
$800$    &19.49  &   19.56          &19.82         &26.33        &$\infty$       &$\infty$        &$\infty$       &$\infty$       &$\infty$  & 452.18\\
$800$    &18.62  &   18.53          &18.96         &19.84        &21.69          &23.85           &21.39          &$\infty$       &$\infty$  &451.74 \\ \hline
$1000$   &20.03  &20.36             &20.48         &23.11        &25.65          &29.12           &31.71          &$\infty$       &$\infty$  &673.90\\
$1000$   &24.30  &24.45             &24.37         &28.23        &29.16          &34.03           &29.64          &38.97          &$\infty$  &684.65\\
$1000$   &20.53  &20.45             &21.81         &25.60        &27.32          &33.23           &$\infty$       &$\infty$       &$\infty$  &661.28\\ \hline
$1200$   &22.86  &22.73             &22.34         &23.07        &$\infty$       &$\infty$        &29.66          &$\infty$       &$\infty$  &785.61\\
$1200$   &25.36  &25.34             &25.81         &26.29        &27.93          &29.43           &30.51          &45.72          &$\infty$  &791.45\\
$1200$   &26.33  &26.52             &26.74         &26.82        &28.41          &31.12           &$\infty$       &$\infty$       &$\infty$  &796.37 \\ \hline
\end{tabular}}
\caption{The objective values for random generated examples}\label{wexam2-core}
 \end{table}

Next the algorithm run for 12 randomly generated problems.
The arcs lengths have been chosen from the set $\{1,...,5\}$
and the number of vertices have been chosen from the set $\{20,30,50,90,140,200,250,500,800,1000,1200\}$.
As the previous example the weights of vertices are all equal
and considered to be $w_i=\frac{1}{num}$, where $num$ is the number of vertices.

\section{Summary and conclusion}
In this paper, an stochastic queue core problem on the tree is investigated when the
the core is operated in an $M/G/1$ environment. First some properties for general core
on the tree are provided. Then the queue l-core problem is studied and an algorithm
having $\max\{o(n^2 l),o(n^2 log^2(n))\}$ time complexity is presented.

\end{document}